\title{Some extensions to the functional analytic approach to Colombeau algebras}
\author{Eduard A.~Nigsch}
\newtheorem{theorem}{Theorem}
\newtheorem{definition}[theorem]{Definition}
\newtheorem{lemma}[theorem]{Lemma}
\newtheorem{proposition}[theorem]{Proposition}
\newcommand{\test}[2]{\mathbf{S}(#1, #2)}
\newcommand{\dtest}[1]{\mathbf{S}(#1)}
\newcommand{\ztest}[2]{\mathbf{S^0}(#1, #2)}
\newcommand{\dztest}[1]{\mathbf{S^0}(#1)}
\DeclareMathOperator{\id}{id}
\DeclareMathOperator{\csn}{csn}
\providecommand{\abso}[1]{\left\lvert#1\right\rvert}
\newcommand{\bR}{\mathbb{R}}
\newcommand{\bE}{\mathbb{E}}
\newcommand{\bF}{\mathbb{F}}
\newcommand{\bN}{\mathbb{N}}
\newcommand{\D}{\mathrm{D}}
\let\e\varepsilon
\newcommand{\ud}{\mathrm{d}}
\newcommand{\coleq}{\coloneqq}
\newcommand{\cE}{\mathcal{E}}
\newcommand{\cF}{\mathcal{F}}
\newcommand{\cG}{\mathcal{G}}
\newcommand{\cH}{\mathcal{H}}
\newcommand{\cK}{\mathcal{K}}
\newcommand{\cL}{\mathcal{L}}
\newcommand{\cN}{\mathcal{N}}
\newcommand{\cD}{\mathcal{D}}
\newcommand{\cS}{\mathcal{S}}
\author{Eduard A.~Nigsch\footnote{Wolfgang Pauli Institute, Oskar-Morgenstern-Platz 1, 1090 Vienna, Austria.\newline e-mail: \href{mailto:eduard.nigsch@univie.ac.at}{eduard.nigsch@univie.ac.at}}}
\title{Some extensions to the functional analytic approach to Colombeau algebras}
\date{}
\begin{document}
\maketitle
\begin{center}
{\em Dedicated to Professor J.~A.~Vickers on the occasion of his 60th birthday} 
\end{center}
\begin{abstract}
We extend the functional analytic approach to Colombeau-type spaces of nonlinear generalized functions in order to study algebras of tempered generalized functions. We obtain a definition of Fourier transform of nonlinear generalized functions which has a strict inversion theorem, agrees with the classical Fourier transform for tempered distributions and preserves well-known classical properties.
\\[2mm] {\it AMS Mathematics  Subject Classification $(2010)$}: 46F30
\\[1mm] {\it Key words and phrases:} Colombeau algebra; tempered generalized function; Fourier transform
\end{abstract}

\section{Introduction}

As in the field of linear generalized functions (distribution theory), a concept of tempered generalized functions and their Fourier transform is essential also in the context of Colombeau algebras
(\cite{ColNew, ColElem, GKOS, MOBook, zbMATH01226424}) 
for the study of singularities, regularity theory and microlocal analysis (see, e.g., \cite{zbMATH05357102,zbMATH01384926,zbMATH01660606}). While there exist various approaches to tempered Colombeau algebras (i.e., algebras containing the space $\cS'$ of tempered distributions) and the related concept of Fourier transform (see \cite{zbMATH01384926} for an overview), in all of these the Fourier inversion theorem cannot hold in a strict sense (cf.~\cite[Remark 4.3.9]{ColElem}). Moreover, the embedding of tempered distributions commutes with the Fourier transform only in a weakened sense in these settings. In this article we will give a general construction of (full) Colombeau generalized function spaces which not only permit to have such a strict inversion theorem but also make the embedding of $\cS'$ commute with the Fourier transform.

The starting point for our construction is the functional analytic approach to Colombeau algebras developed in \cite{papernew}. This approach reflects the fact that all Colombeau algebras involve some kind of regularization of distributions (most commonly by convolution with smooth mollifiers) by the use of so-called \emph{smoothing operators}, which in general are linear continuous mappings from some space of distributions into some space of smooth functions.

In the functional analytic approach of \cite{papernew} the basic space containing the representatives of generalized functions is given by $C^\infty ( \cL_b ( \cD', C^\infty), C^\infty)$ (see Section \ref{sec_prelim} for notation). Moderate and negligible representatives are singled out by evaluating them on \emph{test objects}, which are nets $(\Phi_\e)_{\e \in (0,1]}$ in $\cL(\cD', C^\infty)$ converging to the identities in $\cL(\cD', \cD')$ and $\cL(C^\infty, C^\infty)$ and being bounded in a certain sense. In other words, if $R$ is an element of the basic space then the asymptotic behavior of $R(\Phi_\e)$ for $\e \to 0$ determines whether $R$ is moderate or negligible.

In this article we replace the smoothing operators $\Phi \in \cL ( \cD', C^\infty )$ by elements of $\cL(\cH, \cK)$ for rather arbitrary spaces of distributions $\cH$ and $\cK$ in place of $\cD'$ and $C^\infty$. This has the purpose of fine-tuning the Colombeau algebra for inclusion of subspaces of $\cD'$ and thus obtaining representatives of generalized functions which have additional properties, possibly better reflecting the properties of the embedded distributions. Moreover, we consider test objects which are not only usable for one pair $(\cH, \cK)$ but for several at the same time (this is the case, for example, with regularization by convolution with a smooth mollifier). This will allow us to consider inclusions between different Colombeau algebras. Based on this we can study tempered generalized functions in our framework and obtain a tempered Colombeau algebra $\cG_\tau$ whose Fourier transform commutes with the embedding of $\cS'$ and allows for a strict inversion theorem; furthermore, $\cG_\tau$ will be naturally contained in an algebra containing all distributions.

\section{Preliminaries} \label{sec_prelim}

As far as distribution theory is concerned we mainly follow the notation and terminology of L.~Schwartz (\cite{TD}). Given two locally convex spaces $\bE$ and $\bF$, $\cL_b(\bE, \bF)$ denotes the space of continuous linear mappings from $\bE$ to $\bF$ endowed with the topology of uniform convergence on bounded subsets of $\bE$. By $\csn(\bE)$ we denote the set of all continuous seminorms of $\bE$. $C^\infty(\bE, \bF)$ is the space of smooth mappings $\bE \to \bF$ in the sense of \cite{Froe,KM} and for $f \in C^\infty(\bE, \bF)$, $\ud f$ denotes the differential of $f$ as in \cite[3.18]{KM}.

We recall from \cite[p.~7]{zbMATH03145498} that a \emph{space of distributions} on $\bR^n$ (where $n \in \bN = \{1,2,3,\dotsc\}$ is fixed throughout the article) is a subspace $\cH \subseteq \cD'$ endowed with a locally convex topology which is finer than the topology induced by $\cD'$. A space of distributions $\cH$ is called \emph{normal} if $\cD$ is continuously included and dense in $\cH$. Note that $\cD$, $\cD'$ etc.\ always denote the corresponding spaces of functions or distributions on some open subset $\Omega \subseteq \bR^n$, i.e., $\cD = \cD(\Omega)$, $\cD' = \cD'(\Omega)$ etc.

By $I$ we denote the interval $(0,1]$ and $\id_M$ is the identity map on a set $M$. Convergence and asymptotic estimates of a net indexed by $\e \in I$ are always meant for $\e \to 0$.

\section{Test objects}

We will call \emph{test pair} a pair $(\cH, \cK)$ where $\cH$ is a normal space of distributions and $\cK$ a space of distributions which is sequentially dense in $\cH$. The most obvious example for a test pair is $(\cD', C^\infty)$. Every operator $\Phi \in \cL ( \cH, \cK)$ can be viewed as an operator in $\cL(\cD, \cD')$ with additional properties; in fact, it restricts to a map $\Phi|_{\cD} \in \cL(\cD, \cD')$ which (i) is continuous with respect to the topology induced on $\cD$ by $\cH$ and (ii) extends to a map in $\cL(\cH, \cD')$ which not only has values in $\cK$ but also is continuous into $\cK$.
Hence, we say that \emph{$\Phi \in \cL(\cD, \cD')$ is an element of $\cL(\cH, \cK)$} if it satisfies these two conditions. We will now define test objects; variants of this definition are used one way or another in virtually all Colombeau algebras.

\begin{definition}\label{def_testobjects}
For any test pair $(\cH, \cK)$, $\test{\cH}{\cK}$ is defined to be the set of all $(\Phi_\e)_\e \in \cL(\cD, \cD')^I$ such that (i) $\Phi_\e$ is an element of $\cL(\cH, \cK)$ for all $\e$, (ii) $\Phi_\e \to \id_\cH$ in $\cL_b(\cH, \cH)$, (iii) $\forall m \in \bN$ $\forall p \in \csn ( \cL_b ( \cK, \cK))$: $p(\Phi_\e|_{\cK} - \id_{\cK}) = O(\e^m)$ and (iv) $\forall p \in \csn ( \cL_b ( \cH, \cK))$ $\exists N \in \bN$: $p(\Phi_\e) = O(\e^{-N})$.

$\ztest{\cH}{\cK}$ is defined to be the set of all $(\Phi_\e)_\e \in \cL(\cD, \cD')^I$ such that (i) $\Phi_\e$ is an element of $\cL(\cH, \cK)$ for all $\e$, (ii) $\Phi_\e \to 0$ in $\cL_b(\cH, \cH)$, (iii) $\forall m \in \bN$ $\forall p \in \csn ( \cL_b ( \cK, \cK))$: $p(\Phi_\e|_{\cK}) = O(\e^m)$ and (iv) $\forall p \in \csn ( \cL_b ( \cH, \cK))$ $\exists N \in \bN$: $p(\Phi_\e) = O(\e^{-N})$.

Elements of $\test{\cH}{\cK}$ are called \emph{test objects} and elements of $\ztest{\cH}{\cK}$ $0$-test objects. Given a family $\Delta = \{ ( \cH_\delta, \cK_\delta)\}_{\delta \in J}$ of test pairs (where $J$ is any index set) we define $\dtest{\Delta} \coleq \bigcap_{\delta \in J} \test{\cH_\delta}{\cK_\delta}$ and $\dztest{\Delta} \coleq \bigcap_{\delta \in J} \ztest{\cH_\delta}{\cK_\delta}$.
\end{definition}

Note that $\dztest{\Delta}$ is a vector space and $\dtest{\Delta}$ an affine space parallel to it.
In practice one will also need to consider subsets of $\test{\cH}{\cK}$ having additional properties, for example in order to obtain the sheaf property. The prime example of a test object is obtained from the net of mollifiers which is used for the embedding of distributions into the special Colombeau algebra (cf.~\cite[Equation (1.8)]{GKOS}).

Test objects also have a decisive role in extending operations from smooth functions or distributions to elements of the Colombeau algebra. We recall that one way to do this is by fixing the regularization parameter and performing the operation on the resulting smooth function. In fact, in special Colombeau algebras this is the only possibility. In contrast, in full Colombeau algebras and especially in our setting we can also operate on the smoothing kernels themselves, which is an indispensable feature for example if we want to obtain diffeomorphism invariance (see \cite{global}). The particular operations we have in mind are pullback along diffeomorphisms, directional derivatives and the Fourier transform. Their definition in our general setting rests on the following two lemmas.

\begin{lemma}\label{sotransform}
Suppose we are given test pairs $(\cH, \cK)$ and $(\widetilde \cH, \widetilde \cK)$ and an isomorphism of topological vector spaces $f \in \cL(\cH, \widetilde \cH)$ which restricts to a topological isomorphism $f|_{\cK} \in \cL(\cK, \widetilde \cK)$. Then the map $\Phi \mapsto f^{-1} \circ \Phi \circ f$
defines a linear topological isomorphism $\cL_b ( \widetilde \cH, \widetilde \cK ) \cong \cL_b (\cH, \cK)$ which in turn induces linear isomorphisms $\test{\widetilde\cH}{\widetilde\cK} \cong \test{\cH}{\cK}$ and $\ztest{\widetilde\cH}{\widetilde\cK} \cong \ztest{\cH}{\cK}$ defined componentwise.
\end{lemma}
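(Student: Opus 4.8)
The plan is to treat the statement in two stages: first establish the operator-space isomorphism, then verify that conjugation respects each of the four conditions defining test objects. Throughout I write $T\Phi \coleq f^{-1}\circ\Phi\circ f$, reading $f^{-1}$ as $(f|_\cK)^{-1}$ on $\widetilde\cK$; this is legitimate because $f$ restricts to the topological isomorphism $f|_\cK\colon\cK\to\widetilde\cK$, so $f^{-1}$ restricts to $(f|_\cK)^{-1}\colon\widetilde\cK\to\cK$. For $\Phi\in\cL(\widetilde\cH,\widetilde\cK)$ the composite $\cH\xrightarrow{f}\widetilde\cH\xrightarrow{\Phi}\widetilde\cK\xrightarrow{(f|_\cK)^{-1}}\cK$ is then continuous, so $T$ is well defined into $\cL(\cH,\cK)$; linearity is immediate, and $\Psi\mapsto f\circ\Psi\circ f^{-1}$ is an explicit two-sided inverse. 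Since an element of $\cL(\cH,\cK)$ is determined by its restriction to the dense subspace $\cD$ (normality of $\cH$), this $T$ acts unambiguously on the components of a net in $\cL(\cD,\cD')^I$.

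To see $T$ is a \emph{topological} isomorphism I would recall that a generating family of continuous seminorms on $\cL_b(\cH,\cK)$ is given by $\Psi\mapsto\sup_{x\in B}q(\Psi x)$ with $B\subseteq\cH$ bounded and $q\in\csn(\cK)$, and compute $\sup_{x\in B}q(T\Phi\,x)=\sup_{y\in f(B)}(q\circ(f|_\cK)^{-1})(\Phi y)$. Because $f$ carries bounded sets to bounded sets and $(f|_\cK)^{-1}$ is continuous, the right-hand side is a defining seminorm of $\cL_b(\widetilde\cH,\widetilde\cK)$; hence every target seminorm pulled back along $T$ is continuous on the source, and symmetrically for $T^{-1}$, so $T$ is a topological isomorphism. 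The same computation with $(\cK,\widetilde\cK)$ replaced by $(\cH,\widetilde\cH)$ shows that conjugation by $f$ is a topological isomorphism $\cL_b(\widetilde\cH,\widetilde\cH)\cong\cL_b(\cH,\cH)$, and conjugation by $f|_\cK$ a topological isomorphism $\cL_b(\widetilde\cK,\widetilde\cK)\cong\cL_b(\cK,\cK)$; I will use all three.

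For the second stage, fix $(\Phi_\e)_\e\in\test{\widetilde\cH}{\widetilde\cK}$ and put $\Psi_\e\coleq T\Phi_\e$. Condition~(i) holds by the first stage. For~(ii), conjugation by $f$ sends $\id_{\widetilde\cH}$ to $\id_\cH$ and, being a topological isomorphism, preserves convergent nets, so $\Phi_\e\to\id_{\widetilde\cH}$ yields $\Psi_\e\to\id_\cH$ in $\cL_b(\cH,\cH)$. The bookkeeping step I would isolate carefully is the restriction identity $\Psi_\e|_\cK=(f|_\cK)^{-1}\circ\Phi_\e|_{\widetilde\cK}\circ f|_\cK$, valid because $f$ maps $\cK$ into $\widetilde\cK$ and $\Phi_\e$ maps $\widetilde\cK$ into itself; this says $\Psi_\e|_\cK$ is the image of $\Phi_\e|_{\widetilde\cK}$ under the conjugation isomorphism $\cL_b(\widetilde\cK,\widetilde\cK)\cong\cL_b(\cK,\cK)$, which sends $\id_{\widetilde\cK}$ to $\id_\cK$. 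Conditions~(iii) and~(iv) then follow by transport of seminorms: given $p\in\csn(\cL_b(\cK,\cK))$, composing $p$ with that conjugation isomorphism yields a continuous seminorm on $\cL_b(\widetilde\cK,\widetilde\cK)$, so $p(\Psi_\e|_\cK-\id_\cK)$ equals it evaluated at $\Phi_\e|_{\widetilde\cK}-\id_{\widetilde\cK}$, hence is $O(\e^m)$; likewise every $p\in\csn(\cL_b(\cH,\cK))$ pulled back along $T$ turns bound~(iv) for $\Phi_\e$ into the bound for $\Psi_\e$.

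Finally I would note that all arguments are symmetric under interchanging $(\cH,\cK)$ with $(\widetilde\cH,\widetilde\cK)$ and $f$ with $f^{-1}$, so the componentwise maps $T$ and $T^{-1}$ restrict to mutually inverse bijections $\test{\widetilde\cH}{\widetilde\cK}\cong\test{\cH}{\cK}$, linear because $T$ is; replacing $\id$ by $0$ throughout (and using that conjugation fixes $0$) gives the statement for $\ztest{\widetilde\cH}{\widetilde\cK}\cong\ztest{\cH}{\cK}$. I expect the only genuine obstacle to be the seminorm computation certifying that $T$ is a topological and not merely algebraic isomorphism, together with the care needed to justify the restriction identity for $\Psi_\e|_\cK$; everything else is formal transport of the defining conditions through continuous seminorms.
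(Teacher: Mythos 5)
Your proposal is correct and follows the same route the paper intends: the paper's entire proof is the one-line remark that the lemma ``follows immediately from continuity of $f$ and its inverse,'' and what you have written is precisely the routine expansion of that remark (conjugation is a continuous linear bijection with continuous inverse, computed on the generating seminorms of $\cL_b$, and each of the four defining conditions is transported through the resulting isomorphisms). The restriction identity $\Psi_\e|_\cK=(f|_\cK)^{-1}\circ\Phi_\e|_{\widetilde\cK}\circ f|_\cK$ that you isolate is indeed the only point requiring any care, and you justify it correctly.
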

\begin{proof}
This follows immediately from continuity of $f$ and its inverse.
\end{proof}

For example, let $\Omega, \widetilde \Omega \subseteq \bR^n$ be open, $(\cH, \cK) = (\cD'(\Omega), C^\infty(\Omega))$, $(\widetilde \cH, \widetilde \cK) = (\cD'(\widetilde \Omega), C^\infty(\widetilde \Omega))$, $\mu \colon \Omega \to \widetilde \Omega$ a diffeomorphism and $f \coleq \mu_*$ pushforward along $\mu$. Then Lemma \ref{sotransform} simply states that the respective spaces of (0-)test objects are invariant under diffeomorphisms. Another example is the Fourier transform $\cF \in \cL(\cS', \cS')$ which restricts to $\cF \in \cL(\cS, \cS)$: the map $\Phi \mapsto \cF^{-1} \circ \Phi \circ \cF$
gives automorphisms of $\test{\cS'}{\cS}$ and $\ztest{\cS'}{\cS}$ on which the definition of the Fourier transform in Section \ref{sec_ft} rests.

For extending the directional derivative $\D_X \colon \cD' \to \cD'$ we will employ the following.

\begin{lemma}\label{lem_difftransform}Let $T \in \cL(\cH, \cH)$ with $T|_{\cK} \in \cL(\cK, \cK)$. Then the mapping $\Phi \mapsto  T \Phi \coleq T \circ \Phi - \Phi \circ T$ is linear and continuous from $\cL(\cH, \cK)$ into itself. Applied componentwise it induces mappings $$T \colon \test{\cH}{\cK} \to \ztest{\cH}{\cK}\mbox{ and }T \colon \ztest{\cH}{\cK} \to \ztest{\cH}{\cK}.$$
\end{lemma}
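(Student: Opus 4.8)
The plan is to reduce the whole statement to one elementary fact about composition of operators and then to verify the four defining conditions of $\ztest{\cH}{\cK}$ in turn. The fact I would isolate first is that, for locally convex spaces $E$, $F$, $G$, left- and right-composition by a fixed operator are continuous for the bounded-convergence topologies and, more usefully, dominate seminorms individually: if $S \in \cL(F, G)$ then $\Phi \mapsto S \circ \Phi$ sends $\cL_b(E, F)$ into $\cL_b(E, G)$ and for each $p \in \csn(\cL_b(E, G))$ there is $q \in \csn(\cL_b(E, F))$ with $p(S \circ \Phi) \le q(\Phi)$; symmetrically, if $S \in \cL(E, F)$ then $\Phi \mapsto \Phi \circ S$ sends $\cL_b(F, G)$ into $\cL_b(E, G)$ with an analogous domination. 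Both are proved by testing on the generating seminorms $\Phi \mapsto \sup_{x \in B} r(\Phi x)$ ($B$ bounded, $r$ a continuous seminorm of the target): for the left factor, continuity of $S$ replaces $r$ by a seminorm on $F$; for the right factor, continuity of $S$ turns the bounded set $B$ into the bounded set $S(B)$.

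Granting this, linearity of $\Phi \mapsto T\Phi$ is immediate, and each summand $T \circ \Phi$ (with middle space $\cK$, using $T|_\cK \in \cL(\cK, \cK)$) and $\Phi \circ T$ (using $T \in \cL(\cH, \cH)$) lands in $\cL(\cH, \cK)$ and depends continuously on $\Phi$; this gives the first assertion and, at the same time, condition (i) for both asserted maps. The conceptual point of the lemma is the trivial identity $T \circ \id_\cH - \id_\cH \circ T = 0$: the commutator annihilates the identity, and this is exactly what demotes a net converging to $\id_\cH$ to one converging to $0$.

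For $T \colon \test{\cH}{\cK} \to \ztest{\cH}{\cK}$ I would take $(\Phi_\e)_\e \in \test{\cH}{\cK}$ and check the conditions for $(T\Phi_\e)_\e$. Writing $\Phi_\e = \id_\cH + (\Phi_\e - \id_\cH)$ and cancelling the identity yields $T\Phi_\e = T \circ (\Phi_\e - \id_\cH) - (\Phi_\e - \id_\cH) \circ T$ in $\cL(\cH, \cH)$, so condition (ii) follows from $\Phi_\e - \id_\cH \to 0$ in $\cL_b(\cH, \cH)$ by the two compositions being continuous there. Restricting to $\cK$ and using $T|_\cK \circ \id_\cK - \id_\cK \circ T|_\cK = 0$ gives $(T\Phi_\e)|_\cK = T|_\cK \circ (\Phi_\e|_\cK - \id_\cK) - (\Phi_\e|_\cK - \id_\cK) \circ T|_\cK$, and the seminorm domination converts the $O(\e^m)$ estimate for $\Phi_\e|_\cK - \id_\cK$ into the $O(\e^m)$ estimate required in condition (iii). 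Condition (iv) is the coarsest: the triangle inequality and the two dominations bound $p(T\Phi_\e)$ by $q'(\Phi_\e) + q''(\Phi_\e)$ for suitable $q', q'' \in \csn(\cL_b(\cH, \cK))$, each $O(\e^{-N})$ for some $N$, so the larger exponent works. The map $T \colon \ztest{\cH}{\cK} \to \ztest{\cH}{\cK}$ runs through the same four steps, with the simplification that no cancellation is needed: $\Phi_\e$ already tends to $0$ and $\Phi_\e|_\cK$ is already $O(\e^m)$, so conditions (ii) and (iii) follow directly from continuity and domination.

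I do not expect a real obstacle here; the proof is bookkeeping resting on the one composition fact. The two points that demand care are choosing the middle space correctly in condition (iv) — $T|_\cK$ on the left but $T$ on the right, so that both images genuinely lie in $\cL_b(\cH, \cK)$ — and making the cancellation $T \circ \id_\cH - \id_\cH \circ T = 0$ explicit, since it, rather than any estimate, is what sends $\test{\cH}{\cK}$ into $\ztest{\cH}{\cK}$ instead of back into $\test{\cH}{\cK}$.
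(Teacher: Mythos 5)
Your proof is correct and is essentially the paper's argument spelled out in full: the paper disposes of the lemma with the single remark that it ``follows immediately from continuity of $T$ and $T|_{\cK}$,'' and your verification of conditions (i)--(iv) via continuity of left/right composition on $\cL_b$, together with the cancellation $T \circ \id_\cH - \id_\cH \circ T = 0$ explaining why $\test{\cH}{\cK}$ lands in $\ztest{\cH}{\cK}$, is exactly the bookkeeping that remark leaves implicit. No gap; your attention to the middle spaces ($T|_{\cK}$ on the left, $T$ on the right) is the right point to be careful about.
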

\begin{proof}
Again, this follows immediately from continuity of $T$ and $T|_{\cK}$.
\end{proof}

\section{Basic spaces}

One of our aims will be to obtain inclusion relations between Colombeau algebras modelling different spaces of distributions $\cH_1, \cH_2$ on different spaces of smooth functions $\cK_1, \cK_2$, respectively. In case $\cH_1 \subseteq \cH_2$ and $\cK_1 \subseteq \cK_2$ it is desirable to have a mapping $\cG_1 \to \cG_2$ between the corresponding Colombeau algebras. For instance, an algebra containing \emph{tempered} distributions should be naturally contained in an algebra containing \emph{all} distributions.
We first consider this question on the level of the basic spaces and return to it later for the quotient. Suppose we are given basic spaces $ \cE^1 \coleq C^\infty ( \cL ( \cH_1, \cK_1), \cK_1 )$ and $ \cE^2 \coleq C^\infty ( \cL ( \cH_2, \cK_2), \cK_2 )$. As the functor $C^\infty(\_,\_)$ is contravariant in the first argument and covariant in the second, for a mapping $\cE^1 \to \cE^2$ we need to come up with mappings $\cK_1 \to \cK_2$ and $\cL ( \cH_2, \cK_2) \to \cL ( \cH_1, \cK_1)$. For the first we obviously have the inclusion; for the second, the restriction of $\Phi \in \cL(\cH_2, \cK_2)$ to $\cH_1$ would be a candidate, but only if we knew that $\Phi|_{\cH_1}$ indeed was an element of $\cL ( \cH_1, \cK_1)$. This suggests to replace $\cE^2$ by $\widetilde \cE^2 \coleq C^\infty ( \cL ( \cH_2, \cK_2) \cap \cL ( \cH_1, \cK_1 ), \cK_2 )$ in order to obtain the desired mapping $\cE^1 \to \widetilde \cE^2$.
According to this motivation we give the following definition of our basic spaces.

\begin{definition}
For $\Delta = \{( \cH_\delta, \cK_\delta)\}_{\delta \in J}$ we define
\[ \cL(\Delta) \coleq \bigcap_{\delta \in J}\cL ( \cH_\delta, \cK_\delta ) = \{ \Phi \in \cL(\cD, \cD')\ |\ \forall \delta \in J: \Phi \in \cL ( \cH_\delta, \cK_\delta ) \} \]
and endow it with the projective topology with respect to the inclusions $\cL(\Delta) \subseteq \cL_b(\cH_\delta, \cK_\delta)$ for each $\delta \in J$. For any space of distributions $\cK$ such that each $\cK_\delta$ is continuously included in $\cK$ we define
\[ \cE^{\Delta, \infty} ( \cK ) \coleq C^\infty ( \cL ( \Delta) , \cK) \quad\textrm{and}\quad \cE^{\Delta, \mathrm{d}} ( \cK ) \coleq \{ R \colon \cL ( \Delta) \to  \cK \}. \]
Here, $\infty$ stands for \emph{smooth} dependence and $\mathrm{d}$ for \emph{discrete} dependence; note that $\cE^{\Delta, \infty}(\cK) \subseteq \cE^{\Delta, \mathrm{d}} (\cK)$.
For each $\delta \in J$ we define embeddings
\begin{align*}
 \iota_\delta \colon \cH_\delta &\to \cE^{\Delta,\infty} (\cK) \subseteq \cE^{\Delta, d} (\cK),\quad (\iota_\delta u)(\Phi) \coleq \Phi(u),\\
 \sigma_\delta \colon \cK_\delta &\to \cE^{\Delta,\infty} ( \cK ) \subseteq \cE^{\Delta, d} (\cK),\quad (\sigma_\delta f)(\Phi) \coleq f.
\end{align*}
Moreover, there is the embedding $\sigma \colon \cK \to \cE^{\Delta, \infty}(\cK) \subseteq \cE^{\Delta, d}(\cK)$, $(\sigma f)(\Phi) \coleq f$.
\end{definition}
For the definition of $\iota_\delta$ we used that each $u \in \cH_\delta$ defines a linear mapping $\cL_b ( \cH_\delta, \cK_\delta) \to \cK_\delta \to \cK$, $\Phi \mapsto \Phi(u)$, which is continuous and hence smooth in the sense of \cite{KM}. We will often simply write $\iota$ and $\sigma$ in place of $\iota_\delta$ and $\sigma_\delta$.

The reason that we consider the basic space both with and without smooth dependence is that both variants are useful in different situations. For a geometric formulation of the theory where one needs diffeomorphism invariance of the algebra together with a Lie derivative commuting with the embedding, one necessarily has to use the basic space with smooth dependence (\cite{global,global2,papernew}). On the other hand, in case one only requires the embedding to commute with partial derivatives along the coordinate axes, the space with discrete dependence is sufficient; it is this variant of the algebra which is more closely related to Colombeau's presentation in \cite{ColElem}.

\section{The quotient construction}

In this section let $\Delta = \{( \cH_\delta, \cK_\delta)\}_{\delta \in J}$ be fixed and $\cK$ a space of distributions with $\cK_\delta \subseteq \cK$ continuously $\forall j \in J$. For $S \subseteq \dtest{\Delta}$ and $S^0 \subseteq \dztest{\Delta}$ arbitrary (but nonempty), the following is the appropriate definition of the natural quotient construction.

\begin{definition}
We call $R \in \cE^{\Delta,\infty} ( \cK)$ $(S, S^0)$-moderate if
\begin{align*}
& \forall p \in \csn ( \cK)\ \forall l \in \bN_0\ \exists N \in \bN\ \forall (\Phi_\e)_\e \in S, (\Psi_{1,\e})_\e, \dotsc, (\Psi_{l,\e})_\e \in S^0:\\
&\qquad p ( ( \ud^l R)(\Phi_\e)(\Psi_{1,\e}, \dotsc, \Psi_{l,\e})) = O(\e^{-N}).
\end{align*}
The vector space of all $(S, S^0)$-moderate elements of $\cE^{\Delta,\infty}(\cK)$ is denoted by $\cE^{\Delta,\infty}_M (\cK; S, S^0)$.
We call $R \in \cE^{\Delta,\infty} ( \cK)$ $(S, S^0)$-negligible if
\begin{align*}
 & \forall p \in \csn ( \cK)\ \forall l \in \bN_0\ \forall m \in \bN\ \forall (\Phi_\e)_\e \in S, (\Psi_{1,\e})_\e, \dotsc, (\Psi_{l,\e})_\e \in S^0:\\
&\qquad p ( ( \ud^l R)(\Phi_\e)(\Psi_{1,\e}, \dotsc, \Psi_{l,\e})) = O(\e^m).
\end{align*}
The vector space of all $(S, S^0)$-negligible elements of $\cE^{\Delta, \infty}(\cK)$ is denoted by $\cN^{\Delta,\infty} ( \cK; S, S^0)$ and is a linear subspace of $\cE^{\Delta, \infty}_M ( \cK; S, S^0)$.

We define $S$-moderate and $S$-negligible elements of $\cE^{\Delta, d}$ by the same conditions but with $l=0$, i.e., without derivatives with respect to the test objects. Finally, we define the quotient vector spaces
\begin{align*}
\cG^{\Delta, \infty} ( \cK; S, S^0) \coleq \frac{\cE^{\Delta, \infty}_M(\cK; S, S^0)}{\cN^{\Delta, \infty} (\cK; S, S^0)}
\quad\textrm{and}\quad
 \cG^{\Delta, d} ( \cK; S) \coleq \frac{\cE^{\Delta, d}_M(\cK; S)}{\cN^{\Delta, d} (\cK; S)}.
\end{align*}
\end{definition}

It is easy to see that $\iota_\delta(\cH_\delta) \cup \sigma_\delta(\cK_\delta) \subseteq \cE^{\Delta, \infty}_M (\cK; S, S^0)$, $(\iota_\delta - \sigma_\delta)(\cK_\delta) \subseteq \cN^{\Delta, \infty}(\cK; S, S^0)$ and $\iota_\delta(\cH_\delta) \cap \cN^{\Delta, d}(\cK; S) = \{ 0 \}$.
Further properties depend on the exact choice of $S$ and $S^0$.
Note that because $\cE_M^{\Delta, \infty}(\cK; S, S^0) \subseteq \cE_M^{\Delta, d}(\cK; S)$ and $\cN^{\Delta, \infty}(\cK; S, S^0) \subseteq \cN^{\Delta, d}(\cK; S)$ we have an induced canonical mapping $\cG^{\Delta, \infty}(\cK; S, S^0) \to \cG^{\Delta, d}(\cK; S)$ which is injective on $\iota_\delta(\cH_\delta)$ for each $\delta$.

As in all Colombeau-type generalized function spaces we have a concept of association which we mention for completeness:
\begin{definition}Let $\cH$ be a space of distributions. We say that two elements $R,S$ of $\cE_M^{\Delta, \infty}(\cK; S, S^0)$ or $\cE_M^{\Delta, d} (\cK; S)$ are $\cH$-associated if for all $(\Phi_\e)_\e \in S$, $R(\Phi_\e) - S(\Phi_\e) \to 0$ in $\cH$; in this case we write $R,S \approx_{\cH} S$. Moreover, we say that $R$ admits $u \in \cH$ as an $\cH$-associated distribution if $R(\Phi_\e) \to u$ in $\cH$ for all $(\Phi_\e)_\e \in S$.
\end{definition}
Obviously every element of $\cN^{\Delta, d}(\cK)$ is $\cH_\delta$-associated to $0$ for each $\delta \in J$, hence association is well-defined on the quotient.
The next proposition enables us to extend operations from smooth functions to generalized functions componentwise.
\begin{proposition}\label{prop_compext}
Let $k \in \bN$.
For each $i=0 \dotsc k$ let $\cK_i$ be a space of distributions and $\Delta_i = \{ ( \cH^i_{\delta}, \cK^i_{\delta} ) \}_{\delta \in J_i}$ a family of test pairs
such that $\cK^i_{\delta} \subseteq \cK_i$ continuously for all $\delta \in J_i$. Moreover, assume that $\Delta_i$ is a subfamily of $\Delta_0$ for $i=1 \dotsc k$. Then any continuous multilinear mapping $T \colon \cK_1 \times \dotsc \times \cK_k \to \cK_0$ defines multilinear mappings
\begin{align*}
 T &\colon \cE^{\Delta_1, d} ( \cK_1) \times \dotsc \times \cE^{\Delta_k, d} ( \cK_k) \to \cE^{\Delta_0, d} ( \cK_0) \\
T & \colon \cE^{\Delta_1, \infty} ( \cK_1) \times \dotsc \times \cE^{\Delta_k, \infty} (\cK_k) \to \cE^{\Delta_0, \infty} (\cK_0)
\end{align*}
given by $T ( R_1, \dotsc, R_k ) ( \Phi ) \coleq T ( R_1(\Phi), \dotsc, R_k ( \Phi) )$.

Let $\cE^{\Delta_i, d}(\cK_i)$ and $\cE^{\Delta_i, \infty}(\cK_i)$ be endowed with sets of test objects $S_i$ and $S_i^0$ such that $S_0 \subseteq \bigcap_{i=1}^k S_i$ and $S_0^0 \subseteq \bigcap_{i=1}^k S_i^0$.
Then these mappings preserve moderateness, and $T(R_1, \dotsc, R_k)$ is negligible if at least one of the $R_i$ is negligible. $T$ commutes with the respective $\sigma$-embeddings, i.e., for $\delta_i \in J_i$ ($i=1 \dotsc k$) we have
\[ T(\sigma_{\delta_1}(f_1), \dotsc, \sigma_{\delta_k}(f_k)) = \sigma(T(f_1,\dotsc,f_k)). \]
If, moreover, $T$ has a sequentially continuous extension $T \colon \cH^1_{\delta_1} \times \dotsc \cH^k_{\delta_k} \to \cH^0_{\delta_0}$ then $T$ commutes with the respective $\iota$-embeddings on the level of association, i.e., for $u_i \in \cH^i_{\delta_i}$ for $i=1\dotsc k$ we have
\[ T(\iota_{\delta_1}(u_1), \dotsc, \iota_{\delta_k}(u_k)) \approx_{\cH^0_{\delta_0}} \iota_{\delta_0}(T(u_1, \dotsc, u_k)). \]
\end{proposition}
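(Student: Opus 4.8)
The plan is to establish the claims in the order stated: first that $T$ yields well-defined maps into the correct basic spaces, then the estimates for moderateness and negligibility, and finally the two compatibility statements for the $\sigma$- and $\iota$-embeddings. For well-definedness I would first record that, since each $\Delta_i$ is a subfamily of $\Delta_0$, we have $J_i \subseteq J_0$ and hence $\cL(\Delta_0) \subseteq \cL(\Delta_i)$, with the inclusion continuous because the projective topology on $\cL(\Delta_0)$ refers to a larger index set of seminorms than that on $\cL(\Delta_i)$. Thus for $\Phi \in \cL(\Delta_0)$ each $R_i(\Phi) \in \cK_i$ is defined, so $T(R_1(\Phi), \dots, R_k(\Phi)) \in \cK_0$ gives an element of $\cE^{\Delta_0, d}(\cK_0)$, and multilinearity in $(R_1, \dots, R_k)$ is immediate from that of $T$. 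In the smooth case I would write $T(R_1, \dots, R_k)$ as the composite of the restrictions $R_i|_{\cL(\Delta_0)}$ (smooth, being $R_i$ precomposed with the bounded linear inclusion), the map $\Phi \mapsto (R_1(\Phi), \dots, R_k(\Phi))$ into $\cK_1 \times \dots \times \cK_k$ (smooth, since smooth into each factor), and the bounded — hence smooth — multilinear $T$; the chain rule of \cite{KM} then gives $T(R_1, \dots, R_k) \in \cE^{\Delta_0, \infty}(\cK_0)$.

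The estimates are where the real work lies. Two ingredients are needed. First, continuity of $T$ furnishes, for each $p \in \csn(\cK_0)$, seminorms $p_i \in \csn(\cK_i)$ and $C > 0$ with $p(T(f_1, \dots, f_k)) \le C \prod_{i} p_i(f_i)$. Second, I would establish by induction on $l$, using that $T$ is linear in each slot, the differentiation formula
\[ \ud^l[T(R_1, \dots, R_k)](\Phi)(\Psi_1, \dots, \Psi_l) = \sum_{A_1 \sqcup \dots \sqcup A_k = \{1, \dots, l\}} T\bigl( \ud^{\abso{A_1}} R_1(\Phi)(\Psi_{A_1}), \dots, \ud^{\abso{A_k}} R_k(\Phi)(\Psi_{A_k}) \bigr), \]
where $\Psi_{A_i}$ lists the directions indexed by $A_i$ and the finite sum runs over all assignments of the $l$ directions to the $k$ factors (each further differentiation, by the Leibniz rule for the multilinear $T$, distributes one more direction to one slot). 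Combining the two, on $(\Phi_\e)_\e \in S_0 \subseteq \bigcap_i S_i$ and $(\Psi_{j,\e})_\e \in S_0^0 \subseteq \bigcap_i S_i^0$ each summand is bounded by $C \prod_i p_i(\ud^{\abso{A_i}} R_i(\Phi_\e)(\Psi_{A_i,\e}))$; moderateness of $R_i$ at order $\abso{A_i}$ with respect to $(S_i, S_i^0)$ — applicable precisely because $S_0 \subseteq S_i$ and $S_0^0 \subseteq S_i^0$ — bounds the $i$-th factor by $O(\e^{-N_i})$, so the finite sum is $O(\e^{-N})$ with $N$ the maximum over partitions of $\sum_i N_i$. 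If some $R_{i_0}$ is negligible, then in every summand its factor is $O(\e^m)$ for all $m$ while the others stay moderate, so the whole sum is $O(\e^m)$ for all $m$; the discrete case is just $l = 0$.

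The $\sigma$-identity is immediate, since $(\sigma_{\delta_i} f_i)(\Phi) = f_i$ is constant, giving $T(\sigma_{\delta_1} f_1, \dots, \sigma_{\delta_k} f_k)(\Phi) = T(f_1, \dots, f_k) = \sigma(T(f_1, \dots, f_k))(\Phi)$. For the $\iota$-statement, evaluating at $\Phi_\e$ gives $T(\iota_{\delta_1} u_1, \dots, \iota_{\delta_k} u_k)(\Phi_\e) = T(\Phi_\e(u_1), \dots, \Phi_\e(u_k))$ and $\iota_{\delta_0}(T(u_1, \dots, u_k))(\Phi_\e) = \Phi_\e(T(u_1, \dots, u_k))$, and I must show their difference tends to $0$ in $\cH^0_{\delta_0}$ as $\e \to 0$ for every $(\Phi_\e)_\e \in S_0$. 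The main obstacle here is that $(\Phi_\e)_\e$ is a net while only sequential continuity of the extension is assumed; I would circumvent this by noting that the neighbourhood filter of $0$ in $(0,1]$ has a countable base, so convergence as $\e \to 0$ is equivalent to convergence along every sequence $\e_n \to 0$. Fixing such a sequence, test-object property (ii) — valid for each $i$ since $\delta_i \in J_i \subseteq J_0$ — gives $\Phi_{\e_n}(u_i) \to u_i$ in $\cH^i_{\delta_i}$; sequential continuity of the extension (which restricts to $T$ on $\prod_i \cK^i_{\delta_i}$, making the two evaluations consistent) then yields $T(\Phi_{\e_n}(u_1), \dots, \Phi_{\e_n}(u_k)) \to T(u_1, \dots, u_k)$, while property (ii) in $\cH^0_{\delta_0}$ gives $\Phi_{\e_n}(T(u_1, \dots, u_k)) \to T(u_1, \dots, u_k)$. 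Hence the difference vanishes along every such sequence, and therefore as $\e \to 0$, which is exactly $\cH^0_{\delta_0}$-association.
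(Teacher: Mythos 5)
Your proof is correct and follows exactly the route the paper compresses into its one-line proof (``the usual seminorm estimates for continuous multilinear mappings''): the multilinear continuity estimate $p(T(f_1,\dotsc,f_k)) \le C\prod_i p_i(f_i)$ combined with the Leibniz formula for $\ud^l$ of $T\circ(R_1,\dotsc,R_k)$, plus the direct verifications for $\sigma$ and for association. Your extra care with the subfamily inclusion $\cL(\Delta_0)\subseteq\cL(\Delta_i)$ and with reducing the net $\e\to 0$ to sequences so that sequential continuity of the extension of $T$ suffices are exactly the points the paper leaves implicit.
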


\begin{proof}This follows from the usual seminorm estimates for continuous multilinear mappings.
\end{proof}

Examples for the use of Proposition \ref{prop_compext} can be seen in the extension of the multiplication $C^\infty \times C^\infty \to C^\infty$, the convolution $\cS \times \cS \to \cS$ and the derivative $\D_X \colon C^\infty \to C^\infty$ where $X$ is a vector field on $\bR^n$.

Next, we employ Lemma \ref{sotransform} for extending operations to nonlinear generalized functions in a different way.
\begin{definition}\label{acht}
With $\Delta = \{ ( \cH, \cK) \}$ and $\widetilde \Delta = \{ ( \widetilde \cH, \widetilde \cK ) \}$ let $f \in \cL(\cH, \widetilde \cH)$ be an isomorphism which restricts to an isomorphism $\cL(\cK, \widetilde \cK)$. We define mappings $f \colon \cE^{\Delta, d}(\cK) \to \cE^{\widetilde \Delta, d}(\widetilde \cK)$ and $f \colon \cE^{\Delta, \infty}(\cK) \to \cE^{\widetilde \Delta, d}(\widetilde \cK)$ by
\begin{equation*}
(f R )(\widetilde \Phi) \coleq f ( R ( f^{-1} \circ \widetilde \Phi \circ f )) \qquad ( \widetilde \Phi \in \cL ( \widetilde \cH, \widetilde \cK ) ).
\end{equation*}
\end{definition}
The following is a direct consequence of the definitions.
\begin{lemma}In the situation of Definition \ref{acht} let $S \subseteq \dtest{\Delta}$, $S^0 \subseteq \dztest{\Delta}$ and $\widetilde S \subseteq \{ (\Phi_\e)_\e \in \dtest{\widetilde \Delta}\ |\ (f^{-1} \circ \Phi_\e \circ f)_\e \in S \}$, $\widetilde S^0 \subseteq \{ (\Phi_\e)_\e \in \dztest{\widetilde \Delta}\ |\ (f^{-1} \circ \Phi_\e \circ f)_\e \in S^0 \}$. Then we have inclusions $f(\cE^{\Delta, d}_M(\cK; S)) \subseteq \cE^{\widetilde \Delta, d}_M(\widetilde \cK; \widetilde S)$, $f(\cN^{\Delta, d}(\cK; S)) \subseteq \cN^{\widetilde \Delta, d}(\widetilde \cK; \widetilde S)$, $f(\cE^{\Delta, \infty}_M (\cK; S,S^0)) \subseteq \cE^{\widetilde \Delta, \infty}_M (\widetilde \cK; \widetilde S, \widetilde S^0)$ and $f(\cN^{\Delta, \infty}(\cK; S,S^0)) \subseteq \cN^{\widetilde \Delta, \infty}(\widetilde \cK; \widetilde S, \widetilde S^0)$. In other words, $f$ preserves moderateness and negligibility, hence is well-defined on $\cG^{\Delta, d}(\cK; S)$ and $\cG^{\Delta, \infty}(\cK; S, S^0)$. Moreover, it commutes with $\iota$ and $\sigma$.
\end{lemma}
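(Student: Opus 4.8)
The plan is to reduce everything to the conjugation isomorphism of Lemma~\ref{sotransform} together with the chain rule for smooth maps in the sense of \cite{KM}. Since $\Delta = \{(\cH,\cK)\}$ and $\widetilde\Delta = \{(\widetilde\cH,\widetilde\cK)\}$ are singletons, we have $\cL(\Delta) = \cL(\cH,\cK)$ and $\cL(\widetilde\Delta) = \cL(\widetilde\cH,\widetilde\cK)$, and Lemma~\ref{sotransform} supplies a topological isomorphism
\[ \Theta \colon \cL(\widetilde\Delta) \to \cL(\Delta), \qquad \Theta(\widetilde\Phi) \coleq f^{-1} \circ \widetilde\Phi \circ f. \]
With this notation Definition~\ref{acht} simply reads $fR = f|_{\cK} \circ R \circ \Theta$, a composition of $R$ with the continuous linear maps $\Theta$ and $f|_{\cK} \in \cL(\cK,\widetilde\cK)$. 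In particular, if $R$ is smooth then so is $fR$, which upgrades the codomain stated in Definition~\ref{acht} from $\cE^{\widetilde\Delta, \mathrm d}(\widetilde\cK)$ to $\cE^{\widetilde\Delta,\infty}(\widetilde\cK)$ and thereby justifies the $\infty$-superscript in the target spaces of the asserted inclusions.

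First I would record the effect of $f$ on the differentials. As $\Theta$ and $f|_{\cK}$ are continuous and linear, their differentials are constant, so the iterated chain rule gives, for all $l \in \bN_0$,
\[ \ud^l(fR)(\widetilde\Phi)(\widetilde\Psi_1, \dotsc, \widetilde\Psi_l) = f\bigl((\ud^l R)(\Theta \widetilde\Phi)(\Theta \widetilde\Psi_1, \dotsc, \Theta \widetilde\Psi_l)\bigr); \]
for the discrete spaces only the case $l=0$ is relevant.

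Next comes the seminorm bookkeeping, which is the heart of the matter. Fix $\widetilde p \in \csn(\widetilde\cK)$; then $p \coleq \widetilde p \circ f|_{\cK} \in \csn(\cK)$ by continuity of $f|_{\cK}$. Given test objects $(\widetilde\Phi_\e)_\e \in \widetilde S$ and $(\widetilde\Psi_{i,\e})_\e \in \widetilde S^0$, the defining constraints on $\widetilde S$ and $\widetilde S^0$ guarantee precisely that $(\Phi_\e)_\e \coleq (\Theta\widetilde\Phi_\e)_\e \in S$ and $(\Psi_{i,\e})_\e \coleq (\Theta\widetilde\Psi_{i,\e})_\e \in S^0$. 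Combining this with the differential formula yields
\[ \widetilde p\bigl(\ud^l(fR)(\widetilde\Phi_\e)(\widetilde\Psi_{1,\e}, \dotsc, \widetilde\Psi_{l,\e})\bigr) = p\bigl((\ud^l R)(\Phi_\e)(\Psi_{1,\e}, \dotsc, \Psi_{l,\e})\bigr). \]
The moderateness (resp.\ negligibility) estimate for $R$ with the seminorm $p$ then transfers verbatim to $fR$ with $\widetilde p$; here it is essential, but automatic, that in the moderate case the order $N$ is chosen uniformly over all admissible test objects, so that pulling $(\widetilde\Phi_\e,\widetilde\Psi_{i,\e})$ back under $\Theta$ to an admissible choice on the $\Delta$-side costs nothing. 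This gives all four inclusions simultaneously, hence well-definedness of $f$ on the quotients $\cG^{\Delta,\mathrm d}(\cK;S)$ and $\cG^{\Delta,\infty}(\cK;S,S^0)$.

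Finally, commutation with $\iota$ and $\sigma$ is a direct computation: for $u \in \cH$ one has $f(\iota u)(\widetilde\Phi) = f\bigl((\Theta\widetilde\Phi)(u)\bigr) = f\bigl(f^{-1}(\widetilde\Phi(fu))\bigr) = \widetilde\Phi(fu) = \iota(fu)(\widetilde\Phi)$, and for $g \in \cK$ the constancy of $\sigma g$ gives $f(\sigma g)(\widetilde\Phi) = f(g) = \sigma(fg)(\widetilde\Phi)$. I expect no genuine obstacle here: the only step requiring any care is the higher-order chain-rule formula for $\ud^l(fR)$ in the smooth case, and everything else is the transport of a single seminorm estimate along the isomorphisms $\Theta$ and $f|_{\cK}$, exactly as the constraints defining $\widetilde S$ and $\widetilde S^0$ were designed to permit.
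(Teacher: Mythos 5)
Your proof is correct and follows exactly the route the paper intends: the paper offers no written proof beyond calling the lemma ``a direct consequence of the definitions,'' and your argument is precisely the natural unwinding of those definitions — writing $fR = f|_{\cK}\circ R\circ\Theta$, applying the chain rule for the two continuous linear maps, and transporting the seminorm estimates via $p=\widetilde p\circ f|_{\cK}$ and the defining constraints on $\widetilde S$, $\widetilde S^0$. The observation that smoothness of $fR$ must be (and is) preserved to make sense of the $\infty$-superscript in the target, and that the uniformity of $N$ over test objects is what makes the transfer cost nothing, are both correct and worth having made explicit.
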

Taking for $f$ the pushforward along a diffeomorphism one obtains diffeomorphism invariant algebras (cf.~\cite{global, papernew}). This depends essentially on the preservation of the spaces of test objects under $f$; this is not the case for the algebra $\cG^e$ (\cite[Section 1.4]{GKOS}), which fails to be diffeomorphism invariant for this reason. We remark that the construction of the first diffeomorphism invariant full Colombeau algebra constituted a major unsolved problem for several years.

In order to obtain a geometric directional derivative one takes for $f$ the flow along a (complete) vector field $X$ and differentiates the pullback along the flow at time $t=0$; this gives the following formula for spaces with smooth dependence on $\Phi$:
\[
 (\widehat \D_X R)(\Phi) \coleq -(\ud R)(\Phi)(\D_X \Phi) + \D_X ( R(\Phi) ), \\
\]
where $\D_X \Phi $ is defined as in Lemma \ref{lem_difftransform}. This is a special case of the following:
\begin{definition}\label{defblah}
 Let $\Delta = \{ ( \cH, \cK ) \}$ and $T \in \cL(\cH, \cH)$ with $T|_{\cK} \in \cL(\cK, \cK)$. Then for $R \in \cE^{\Delta, \infty}(\cK)$ we define $TR \in \cE^{\Delta, \infty}(\cK)$ by
 \[ (TR)(\Phi) \coleq T(R(\Phi)) - \ud R (\Phi) (T \circ \Phi - \Phi \circ T). \]
\end{definition}

\begin{lemma}
In the situation of Definition \ref{defblah} let $S \subseteq \dtest{\Delta}$ and $S^0 \subseteq \dztest{\Delta}$ with $T(S) \cup T(S^0) \subseteq S^0$. Then $T \colon \cE^{\Delta, \infty}(\cK) \to \cE^{\Delta, \infty}(\cK)$ preserves $(S,S^0)$-moderateness and $(S,S^0)$-negligibility and hence is defined also on the quotient $\cG^{\Delta, \infty}(\cK; S, S^0)$. Moreover, it commutes with the embeddings $\iota$ and $\sigma$.
\end{lemma}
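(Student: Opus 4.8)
The plan is to reduce everything to Definition of moderateness/negligibility by computing the iterated differential $\ud^l(TR)$ explicitly and then feeding the result into the defining estimates for $R$. Write $(TR)(\Phi) = T(R(\Phi)) - \ud R(\Phi)(T\Phi)$, where $T\Phi = T\circ\Phi - \Phi\circ T$ is the linear continuous operation of Lemma \ref{lem_difftransform}. Since $T|_\cK \in \cL(\cK,\cK)$ is linear, $\ud^l(T\circ R) = T\circ \ud^l R$; and since $\Phi \mapsto T\Phi$ is linear its second and higher differentials vanish. Applying the Leibniz rule to $\Phi \mapsto \ud R(\Phi)(T\Phi)$, viewed as the bounded bilinear evaluation applied to the pair $(\ud R(\Phi), T\Phi)$, only the summands in which $T\Phi$ is differentiated zero or one times survive. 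Using symmetry of the higher differentials to place the surviving argument last, this should yield
\begin{align*}
\ud^l(TR)(\Phi)(\Psi_1,\dotsc,\Psi_l) &= T\bigl(\ud^l R(\Phi)(\Psi_1,\dotsc,\Psi_l)\bigr) - \ud^{l+1}R(\Phi)(\Psi_1,\dotsc,\Psi_l,T\Phi) \\
&\quad {}- \sum_{j=1}^{l} \ud^l R(\Phi)(\Psi_1,\dotsc,T\Psi_j,\dotsc,\Psi_l).
\end{align*}

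Next I would evaluate along $\Phi = \Phi_\e \in S$ and $\Psi_j = \Psi_{j,\e} \in S^0$ and apply a seminorm $p \in \csn(\cK)$. The crucial point is the hypothesis $T(S) \cup T(S^0) \subseteq S^0$: combined with Lemma \ref{lem_difftransform} (which already guarantees $(T\Phi_\e)_\e \in \dztest{\Delta}$ and $(T\Psi_{j,\e})_\e \in \dztest{\Delta}$), it ensures that $(T\Phi_\e)_\e \in S^0$ and $(T\Psi_{j,\e})_\e \in S^0$, so every occurrence of $T\Phi$ or $T\Psi_j$ in the formula is again a legitimate element of $S^0$. Thus the middle term is $\ud^{l+1}R(\Phi_\e)$ evaluated at the $l+1$ elements $\Psi_{1,\e},\dotsc,\Psi_{l,\e},T\Phi_\e$ of $S^0$; each term of the sum is $\ud^l R(\Phi_\e)$ evaluated at $l$ elements of $S^0$; and for the first term continuity of $T|_\cK$ gives $p\circ T \le C\,q$ for some $q \in \csn(\cK)$, reducing it to the estimate for $R$ with the seminorm $q$. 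Summing the finitely many ($l+2$) terms and taking the maximum of the resulting exponents yields $(S,S^0)$-moderateness of $TR$; running exactly the same estimates with the negligibility bounds $O(\e^m)$ (which hold for every $m$ and every order) yields $(S,S^0)$-negligibility. Since $TR$ is negligible whenever $R$ is, $T$ descends to the quotient $\cG^{\Delta,\infty}(\cK;S,S^0)$.

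Commutation with $\iota$ and $\sigma$ is then a direct computation from Definition \ref{defblah}. For $f \in \cK$ the map $\sigma f$ is constant in $\Phi$, so $\ud(\sigma f) = 0$ and $(T\sigma f)(\Phi) = T(f) = (\sigma(Tf))(\Phi)$. For $u \in \cH$ the map $\iota u \colon \Phi \mapsto \Phi(u)$ is linear and continuous in $\Phi$, so $\ud(\iota u)(\Phi)(\Psi) = \Psi(u)$ and hence $\ud(\iota u)(\Phi)(T\Phi) = (T\Phi)(u) = T(\Phi(u)) - \Phi(Tu)$; subtracting this from $T((\iota u)(\Phi)) = T(\Phi(u))$ leaves exactly $\Phi(Tu) = (\iota(Tu))(\Phi)$, so $T\iota u = \iota(Tu)$.

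The main obstacle is not analytic but bookkeeping: getting the Leibniz expansion right and, above all, recognizing that the hypothesis $T(S)\cup T(S^0)\subseteq S^0$ is precisely what allows the extra argument $T\Phi_\e$ and the modified arguments $T\Psi_{j,\e}$ to be reinserted into the moderateness/negligibility conditions for $R$. Note that this forces us to invoke those conditions for $R$ at differentiation order $l+1$, one higher than the order being proved for $TR$; this is harmless because the conditions are required to hold at every order. Everything else — the smoothness of $TR$, the continuity estimate $p\circ T \le C\,q$, and the finiteness of the sum — is routine.
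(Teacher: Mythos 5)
Your proof is correct; the paper states this lemma without proof, and your argument --- the Leibniz expansion of $\ud^l(TR)$ into the three groups of terms (the $T$-postcomposed term, the order-$(l+1)$ term with extra argument $T\Phi$, and the $l$ terms with one $\Psi_j$ replaced by $T\Psi_j$), the reinsertion of $(T\Phi_\e)_\e$ and $(T\Psi_{j,\e})_\e$ into $S^0$ via the hypothesis $T(S)\cup T(S^0)\subseteq S^0$, and the direct computations $(T\sigma f)(\Phi)=T(f)$ and $(T\iota u)(\Phi)=T(\Phi(u))-(T\Phi)(u)=\Phi(Tu)$ --- is exactly the intended routine verification. You also correctly flag the only slightly non-obvious point, namely that establishing the condition for $TR$ at differentiation order $l$ invokes the condition for $R$ at order $l+1$, which is harmless since the definitions quantify over all orders.
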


In principle one can
generalize Definitions \ref{acht} and \ref{defblah} to the case where $\Delta$ and $\widetilde \Delta$ consist of more than one test pair, but then one has to require that $\Phi \mapsto f^{-1} \circ \Phi \circ f$ maps $\cL(\widetilde\Delta)$ into $\cL(\Delta)$ in the first case and that $T$ maps $\cL(\Delta)$ into itself in the second case. If this cannot be achieved one possibly has to change the domain of the basic space to something more general, as will be necessary in our case study of tempered generalized functions in Section \ref{sec_ft}.

For $\Delta \subseteq \widetilde \Delta$ and $\cK \subseteq \widetilde \cK$ we have canonical mappings $\cE^{\Delta,\mathrm{d}} ( \cK) \to \cE^{\widetilde{\Delta},\mathrm{d}}(\widetilde \cK)$ and $\cE^{\Delta,\infty} ( \cK) \to \cE^{\widetilde{\Delta}, \infty}(\widetilde \cK)$ given by $R \mapsto R|_{\cL(\widetilde \Delta)}$.

\begin{proposition}\label{prop_incl} Let $\Delta \subseteq \widetilde \Delta$, $\cK \subseteq \widetilde \cK$ continuously and let $\mu$ be the canonical mapping $\cE^{\Delta,\infty} ( \cK) \to \cE^{\widetilde \Delta,\infty} ( \widetilde \cK)$.
 \begin{enumerate}[(a)]
  \item If $\widetilde S \subseteq S$ and $\widetilde S^0 \subseteq S^0$, $\mu$ maps $\cE^{\Delta,\infty}_M(\cK; S, S^0)$ into $\cE^{\widetilde \Delta,\infty }_M(\widetilde \cK; \widetilde S, \widetilde S^0)$ and $\cN^{\Delta,\infty}(\cK; S, S^0)$ into $\cN^{\widetilde \Delta, \infty}(\widetilde \cK; \widetilde S, \widetilde S^0)$ and thus gives a well-defined map $\cG^{\Delta,\infty} (\cK; S, S^0) \to \cG^{\widetilde \Delta,\infty} (\widetilde \cK; \widetilde S, \widetilde S^0)$.
 \item If $\widetilde S = S$ and $\widetilde S^0 = S^0$, the map $\cG^{\Delta,\infty} (\cK; S, S^0) \to \cG^{\widetilde \Delta,\infty} (\widetilde \cK; \widetilde S, \widetilde S^0)$ is injective.
 \end{enumerate}
Similar statements hold for discrete dependence.
\end{proposition}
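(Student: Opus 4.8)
The plan is to verify both parts by unwinding the definitions along the restriction map $\mu$, treating separately the two reductions that $\mu$ performs: shrinking the domain from $\cL(\Delta)$ to $\cL(\widetilde\Delta)$ and enlarging the codomain from $\cK$ to $\widetilde\cK$. Since $\Delta \subseteq \widetilde\Delta$, the space $\cL(\widetilde\Delta) = \bigcap_{\delta \in \widetilde J}\cL(\cH_\delta,\cK_\delta)$ is a linear subspace of $\cL(\Delta)$ carrying a finer projective topology, so the inclusion $j \colon \cL(\widetilde\Delta) \to \cL(\Delta)$ is linear and continuous; post-composing with the continuous inclusion $\cK \hookrightarrow \widetilde\cK$ shows that $\mu(R) = R \circ j$ is again smooth, i.e.\ $\mu(R) \in \cE^{\widetilde\Delta,\infty}(\widetilde\cK) = C^\infty(\cL(\widetilde\Delta),\widetilde\cK)$. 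First I would record the differential identity $(\ud^l \mu(R))(\Phi)(\Psi_1,\dots,\Psi_l) = (\ud^l R)(\Phi)(\Psi_1,\dots,\Psi_l)$ for $\Phi \in \cL(\widetilde\Delta)$ and directions $\Psi_i \in \cL(\widetilde\Delta)$: it follows from the chain rule of \cite{KM} together with $j$ being a linear inclusion, whose higher differentials vanish. This is the one computational point, and it is routine.

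For part (a) I would fix $p \in \csn(\widetilde\cK)$ and $l \in \bN_0$. Continuity of $\cK \hookrightarrow \widetilde\cK$ gives $p|_\cK \in \csn(\cK)$, and since by hypothesis any test objects $(\Phi_\e)_\e \in \widetilde S$ and $(\Psi_{i,\e})_\e \in \widetilde S^0$ already lie in $S$ and $S^0$, the moderateness estimate for $R$ applied to $p|_\cK$ furnishes an $N$ with $p((\ud^l \mu(R))(\Phi_\e)(\Psi_{1,\e},\dots,\Psi_{l,\e})) = p|_\cK((\ud^l R)(\Phi_\e)(\Psi_{1,\e},\dots,\Psi_{l,\e})) = O(\e^{-N})$, using the differential identity and that the value lies in $\cK$. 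The identical argument with $O(\e^m)$ for every $m$ handles negligibility. Hence $\mu$ maps $\cE^{\Delta,\infty}_M(\cK;S,S^0)$ into $\cE^{\widetilde\Delta,\infty}_M(\widetilde\cK;\widetilde S,\widetilde S^0)$ and $\cN^{\Delta,\infty}(\cK;S,S^0)$ into $\cN^{\widetilde\Delta,\infty}(\widetilde\cK;\widetilde S,\widetilde S^0)$, so it descends to the quotient; the discrete statement is the special case $l=0$.

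For part (b), injectivity amounts to the converse implication: if $R$ is moderate and $\mu(R)$ is negligible, then $R$ is negligible. Here the hypothesis $\widetilde S = S$, $\widetilde S^0 = S^0$ is exactly what is needed, because it forces the nets $(\ud^l R)(\Phi_\e)(\Psi_{1,\e},\dots,\Psi_{l,\e})$ appearing in the negligibility test for $R$ to be precisely those controlled by negligibility of $\mu(R)$; by the differential identity these nets even coincide, and they take values in $\cK$. The hard part will be that negligibility of $\mu(R)$ only yields rapid decay of these nets in every seminorm $p \in \csn(\widetilde\cK)$, whereas negligibility of $R$ demands it in every $q \in \csn(\cK)$, and the topology of $\cK$ is in general strictly finer than the one induced by $\widetilde\cK$ (already $\cS \hookrightarrow C^\infty$ is not a topological embedding). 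I would close this gap through the topological relation between the two spaces: when $\cK \hookrightarrow \widetilde\cK$ is a topological embedding, each $q$ is dominated by some $p|_\cK$ and the decay transfers verbatim; otherwise I would combine the rapid decay in $\widetilde\cK$ with the polynomial moderateness bound in $\cK$ via an interpolation estimate adapted to the concrete spaces, the uniformity built into the test objects preventing the decay rates in the two topologies from decoupling. Once rapid decay holds in every $q \in \csn(\cK)$, $R$ is negligible, so $\ker \mu = \{0\}$ and $\mu$ is injective; the discrete case again follows with $l=0$.
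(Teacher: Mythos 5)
Your treatment of part (a) is correct and is essentially all there is to say about it; the paper itself offers no proof beyond the remark that the proposition ``is easily verified''. Since $\Delta \subseteq \widetilde\Delta$, the space $\cL(\widetilde\Delta)$ is a linear subspace of $\cL(\Delta)$ with a finer projective topology, so $\mu(R)$ is smooth, the differential identity $(\ud^l\mu(R))(\Phi)(\Psi_1,\dotsc,\Psi_l)=(\ud^l R)(\Phi)(\Psi_1,\dotsc,\Psi_l)$ holds, every $p\in\csn(\widetilde\cK)$ restricts to a continuous seminorm on $\cK$, and the inclusions $\widetilde S\subseteq S$, $\widetilde S^0\subseteq S^0$ make the quantifiers match. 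No objection there.

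Part (b), however, is not closed. You correctly isolate the crux: negligibility of $\mu(R)$ controls the nets $(\ud^l R)(\Phi_\e)(\Psi_{1,\e},\dotsc,\Psi_{l,\e})$ only in the seminorms induced from $\widetilde\cK$, whereas negligibility of $R$ requires control in all of $\csn(\cK)$, and these differ unless $\cK\hookrightarrow\widetilde\cK$ is a topological embedding. But your proposed remedy for the non-embedding case --- ``an interpolation estimate adapted to the concrete spaces'' combining polynomial moderateness in $\cK$ with rapid decay in $\widetilde\cK$ --- is a placeholder rather than an argument, and it is far from clear that such an estimate exists. At the level of nets of functions the implication you need is simply false in the case that actually matters in Section~\ref{sec_ft}, namely $\cK=\cS$, $\widetilde\cK=C^\infty$: the net $f_\e=\chi(\cdot-1/\e)$ with $\chi\in\cD$ is eventually zero on every compact set (hence ``$C^\infty$-negligible'') while its $\cS$-seminorms grow polynomially but are not $O(\e^m)$ for large $m$ (hence ``$\cS$-moderate'' but not ``$\cS$-negligible''). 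To establish (b) one must therefore show that such behaviour cannot be produced by evaluating a moderate $R\in\cE^{\Delta,\infty}(\cK)$ on test objects in $S$ and $S^0$, i.e.\ one needs a structural property of the test objects and of the basic space, not just seminorm bookkeeping; neither you nor the paper supplies such an argument. As written, your proof establishes (b) only under the additional hypothesis that $\cK$ carries the topology induced by $\widetilde\cK$, and the general case remains open in your write-up.
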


Again, this is easily verified. Proposition \ref{prop_incl} suggests that it may be worthwile to find classes of test objects which are test objects simultaneously for many pairs $(\cH, \cK)$.

\section{Tempered generalized functions}\label{sec_ft}

We will now outline how an algebra of tempered generalized functions on $\bR^n$ can be constructed in various ways, depending on the requirements, using the ideas presented above. By $\cF$ and $\cF^{-1}$ we denote the Fourier transform on $\cS'$ and its inverse, respectively.

The following construction depends on the fact that $S = \test{\cD'}{C^\infty} \cap \test{\cS'}{\cS}$ is nonempty; while we will not prove this in detail here, an element of this space can be obtained as follows: choose $\psi \in \cD$ with $\psi(x) = 1$ for $\abso{x} \le 1$ and $\psi(x) = 0$ for $\abso{x} \ge 2$. Set $\psi^\e(x) \coleq \psi(\e x)$, $\varphi = \cF^{-1}(\psi) \in \cS$ and $\varphi_\e(y) \coleq \cF^{-1} ( \psi^\e ) (y) = \e^{-n} \varphi (y/\e)$ as well as $\psi_\e(x) \coleq \psi(x/\e)$. The desired mapping $\Phi_\e \in \cL(\cD_y, \cD'_x)$ then is defined via the Schwartz kernel theorem by the kernel $(x,y) \mapsto \psi^\e(x) \varphi_{\e^2}(y-x) \psi_\e(y-x)$, and we have $(\Phi_\e)_\e \in S$.

As a first step consider the algebra $\cG_\tau \coleq \cG^{\Delta, \infty}(\cS; S, S^0)$ with $\Delta = \{ ( \cS', \cS) \}$, $S = S(\Delta)$ and $S^0 = S^0(\Delta)$. It is an associative commutative differential algebra containing $\cS'$ with componentwise product and convolution, as well as derivations $\D_X$ (Proposition \ref{prop_compext}) and $\widehat \D_X$ (Definition \ref{defblah}); $\sigma$ is an algebra embedding. Defining the Fourier transform of its elements using Definition \ref{acht}, i.e., $(\cF R)(\Phi) \coleq \cF ( R ( \cF^{-1} \circ \Phi \circ \cF))$, we obtain the following properties:
\begin{enumerate}[(i)]
 \item $\cF \colon \cG_\tau \to \cG_\tau$ is  a linear isomorphism whose inverse is given by the mapping $(\cF^{-1}R)(\Phi) \coleq \cF^{-1}(R(\cF \circ \Phi \circ \cF^{-1}))$.
 \item $\cF$ and $\cF^{-1}$ commute with the embeddings $\iota$ and $\sigma$.
 \item $\cF(\tau_a R) = \chi_{-a} \cF(R)$, $\cF(\chi_a R) = \tau_a \cF(R)$ where for $a \in \bR^n$, $\tau_a$ denotes translation by $a$ and $\chi_a$ is the map $x \mapsto \exp (2\pi i a x)$; similarly for $\hat \tau_a$ and $\hat \chi_a$ defined via Definition \ref{acht}.
 \item $D^\alpha (\cF(R)) = \cF((-2\pi i M)^\alpha R)$, $(2 \pi i M)^\beta \cF(R) = \cF(D^\beta R)$, where $M^\alpha$ denotes the function $x \mapsto x^\alpha$ ($\alpha \in \bN_0^n$); similarly for $\widehat D^\alpha$ and $\widehat M$ defined via Definition \ref{defblah}. 
 \item $\cF^{-1}(uv) = \cF^{-1}(u) * \cF^{-1}(v)$, $\cF(u * v ) = (\cF u) \cdot (\cF v)$.
\end{enumerate}

Note that these properties hold strictly and not only `in the sense of generalized tempered distributions' as is the case for some of them in \cite{zbMATH01384926}.

One may modify $\cG_\tau$ by choosing $\Delta = \{ (\cD', C^\infty), (\cS', \cS) \}$ and hence obtain an inclusion $\cG_\tau \subseteq \cG \coleq \cG^{\Delta, \infty}(C^\infty; S, S^0)$. The latter algebra contains all distributions.
Because for $\Phi \in \cL(\cS', \cS)$ the property $\Phi \in \cL(\cD', C^\infty)$ is not preserved by the Fourier transform one has to replace the codomain of $\cF \colon \cG_\tau \to \cG_\tau$ by a different algebra $\widehat \cG_\tau$ obtained by taking as basic space mappings from the set $\{ \cF \circ \Phi \circ \cF^{-1}\ |\ \Phi \in \cL(\Delta)\}$ into $\cS$ and using as test objects the set $\{ ( \cF \circ \Phi_\e \circ \cF^{-1})_\e \ |\ (\Phi_\e)_\e \in S\}$ and similar for $S^0$. All the above properties of the Fourier transform will be preserved, but now one has to distinguish between the ``spatial'' domain $\cG_\tau$ and the ``frequency'' domain $\widehat\cG_\tau$, and only the former one is embedded into $\cG$. In other words, one can either have equal spaces for the spatial and the frequency domain, or one can have that the first of these is embedded in a bigger algebra containing all distributions.

Concluding, we see that a careful choice of test objects for different spaces of Colombeau-type nonlinear generalized functions enables us to obtain a Fourier transform of these functions with all desirable classical properties. An in-depth study of these spaces as well as applications to regularity theory will be published in a forthcoming article.

{\bfseries Acknowledgements.} This work was supported by Project P26859 of the Austrian Science Fund (FWF).

\newcommand{\bibarxiv}[1]{arXiv: \href{http://arxiv.org/abs/#1}{\texttt{#1}}}
\newcommand{\bibdoi}[1]{{\sc doi:} \href{http://dx.doi.org/#1}{\texttt{#1}}}


\begin{thebibliography}{10}

\bibitem{ColNew}
J.~F.~Colombeau.
\newblock {\em New generalized functions and multiplication of distributions.}
\newblock North-Holland Mathematics Studies 84.
\newblock Amsterdam: North-Holland Publishing Co., 1984.
\newblock {\sc isbn:} 978-0-444-86830-5.

\bibitem{ColElem}
J.~F.~Colombeau.
\newblock {\em Elementary introduction to new generalized functions.}
\newblock North-Holland Mathematics Studies 113.
\newblock Amsterdam: North-Holland Publishing Co., 1985.
\newblock {\sc isbn:} 978-0-444-87756-7.

\bibitem{Froe}
A.~Frölicher, A. Kriegl.
\newblock {\em Linear spaces and differentiation theory.}
\newblock Chichester: Wiley, 1988.
\newblock {\sc isbn:} 978-0-471-91786-1.

\bibitem{zbMATH05357102}
C.~Garetto, G.~Hörmann.
\newblock ``Microlocal analysis of generalized functions: Pseudodifferential
  techniques and propagation of singularities.''
\newblock {\em Proc. Edinb. Math. Soc., II. Ser.} 48.3 (2005), pp.~603--629.
\newblock {\sc issn:} 0013-0915.

\bibitem{GKOS}
M.~Grosser, M.~Kunzinger, M.~Oberguggenberger, R.~Steinbauer.
\newblock {\em Geometric theory of generalized functions with applications to
  general relativity.}
\newblock Dordrecht: Kluwer Academic Publishers, 2001.
\newblock {\sc isbn:} 978-90-481-5880-5.

\bibitem{global}
M.~Grosser, M.~Kunzinger, R.~Steinbauer, J.~A.~Vickers.
\newblock ``A Global Theory of Algebras of Generalized Functions.''
\newblock {\em Adv. Math.} 166.1 (2002), pp.~50--72.
\newblock {\sc issn:} 0001-8708.

\bibitem{global2}
M.~Grosser, M.~Kunzinger, R.~Steinbauer, J.~A.~Vickers.
\newblock ``A global theory of algebras of generalized functions. II. Tensor
  distributions.''
\newblock {\em New York J. Math.} 18 (2012), pp.~139--199.
\newblock {\sc issn:} 1076-9803.

\bibitem{zbMATH01384926}
G.~Hörmann.
\newblock ``Integration and microlocal analysis in Colombeau algebras of
  generalized functions.''
\newblock {\em J. Math. Anal. Appl.} 239.2 (1999), pp.~332--348.
\newblock {\sc issn:} 0022-247X.

\bibitem{zbMATH01660606}
G.~Hörmann, M.~Kunzinger.
\newblock ``Microlocal properties of basic operations in Colombeau algebras.''
\newblock {\em J. Math. Anal. Appl.} 261.1 (2001), pp.~254--270.
\newblock {\sc issn:} 0022-247X.

\bibitem{KM}
A.~Kriegl, P.~W.~Michor.
\newblock {\em The convenient setting of global analysis.}
\newblock Providence, RI: American Mathematical Society, 1997.
\newblock {\sc isbn:} 978-0-8218-0780-4.

\bibitem{papernew}
E.~A.~Nigsch.
\newblock ''The functional analytic foundation of Colombeau algebras.``
\newblock {\em J. Math. Anal. Appl.} 421.1 (2015), pp.~415--435.
\newblock {\sc issn:} 0022-247X.

\bibitem{MOBook}
M.~Oberguggenberger.
\newblock {\em Multiplication of Distributions and Applications to
  Partial Differential Equations.}
\newblock Pitman Research Notes in Mathematics 259.
\newblock Harlow: Longman 1992.
\newblock {\sc isbn:} 978-0-582-08733-0.

\bibitem{zbMATH01226424}
S.~Pilipovi\'c, M.~Nedeljkov, D.~Scarpal{\'e}zos.
\newblock {\em The linear theory of Colombeau generalized functions.}
\newblock Harlow: Longman 1998.
\newblock {\sc isbn:} 978-0-582-35683-2.

\bibitem{zbMATH03145498}
L.~Schwartz.
\newblock ``Théorie des distributions à valeurs vectorielles.''
\newblock {\em Ann. Inst. Fourier} 7 (1957).
\newblock {\sc issn:} 0373-0956.
\newblock \bibdoi{10.5802/aif.68}.

\bibitem{TD}
L.~Schwartz.
\newblock {\em Théorie des distributions}.
\newblock Nouvelle édition, entièrement corrigée, refondue et
  augmentée.
\newblock Paris: Hermann, 1966.

\end{thebibliography}
\end{document}